\documentclass[12pt]{amsart}
\usepackage{amscd,amssymb,hyperref}
\usepackage{graphicx}
\DeclareGraphicsExtensions{.pdf}

\newtheorem{thm}{Theorem}[section]
\newtheorem{lem}[thm]{Lemma}

\newtheorem{prop}[thm]{Proposition}

\newtheorem{question}[thm]{Question}
\def\square{\vbox{
      \hrule height 0.4pt
      \hbox{\vrule width 0.4pt height 5.5pt \kern 5.5pt \vrule width 0.4pt}
      \hrule height 0.4pt}}

\def\ch\mathrm{c h}

\long\def\symbolfootnote[#1]#2{\begingroup%
\def\thefootnote{\fnsymbol{footnote}}\footnote[#1]{#2}\endgroup}

\newcommand{\Z}{\mathbb{Z}}

\numberwithin{equation}{section}

\newcommand{\auths}[1]{\textrm{#1},}
\newcommand{\artTitle}[1]{\textsl{#1},}

\title{On homotopy  braids}

\author{V. G. Bardakov}
\address{Novosibirsk State University, Pirogova str. 1, Novosibirsk, 630090, Russia}
\address{Sobolev Institute of Mathematics, prosp. Koptyuga 4, Novosibirsk 630090, Russia}
\address{Novosibirsk State Agrarian University, Dobrolyubova street, 160, Novosibirsk, 630039, Russia}
\address{Regional Scientific and Educational Mathematical Center of Tomsk State University, 36 Lenin Ave., Tomsk, Russia}
\email{bardakov@math.nsc.ru}

\author{V. V. Vershinin}
\address{D\'epartement des Sciences Math\'ematiques,
                               Universit\'e Montpellier II,
Place Eug\`ene Bataillon,
34095 Montpellier cedex 5, France} \email{vladimir.verchinine@umontpellier.fr}
\address{Sobolev Institute of Mathematics, Novosibirsk 630090,
Russia }
\email{ versh@math.nsc.ru}

\author{J. Wu}
\address{Center for Topology and Geometry based Technology, Hebei Normal University, No.20 Road East. 2nd Ring South, Yuhua District, Shijiazhuang, Hebei, 050024 CHINA}
\address{School of Mathematical Sciences, Hebei Normal University, No.20 Road East. 2nd Ring South, Yuhua District, Shijiazhuang, Hebei, 050024 CHINA} \email{wujie@hebtu.edu.cn}

\subjclass[2000]{Primary 57M; Secondary 55, 20E99}
\keywords{Homotopy braid, word problem, reduced free group}

\begin{document}

\begin{abstract}
The homotopy braid group $\widehat{B}_n$ is the subject of the paper. First, linearity 
of $\widehat{B}_n$ over the integers is proved. Then we prove that the group $\widehat{B}_3$ is torsion free.

\end{abstract}

\maketitle

%\tableofcontents

\section{Introduction}

The homotopy braid groups are one of the interesting variations of classical braid groups.

Two geometric braids with the same endpoints are called {\it homotopic} if one can be 
deformed to
the other by  homotopies of the braid strings  which fix the endpoints,
so that different strings do not intersect.  If two geometric braids are isotopic, they are
evidently homotopic.  E.~ Artin \cite{Art2} posed the question of whether the
notions of isotopy and homotopy of braids are different or the same. 
Namely he wrote:

''Assume that two braids can be deformed into each other by a deformation of the most 
general nature including self intersection of each string but avoiding 
intersection of two different strings. Are they isotopic?"

Deborah Goldsmith \cite{G}
gave an example of a braid which is not trivial in the isotopic sense, but is homotopic 
to the
trivial braid.
At first she expressed this braid and homotopy process by the  pictures. We give these pictures in Figure~1.

\begin{figure}%[page 1]
%\epsfbox{fig_1-2.eps}
%\end{figure}
%\begin{figure}
%	\centering
%\includepdf{g.pdf}
\center{\includegraphics[width=1.0\linewidth]{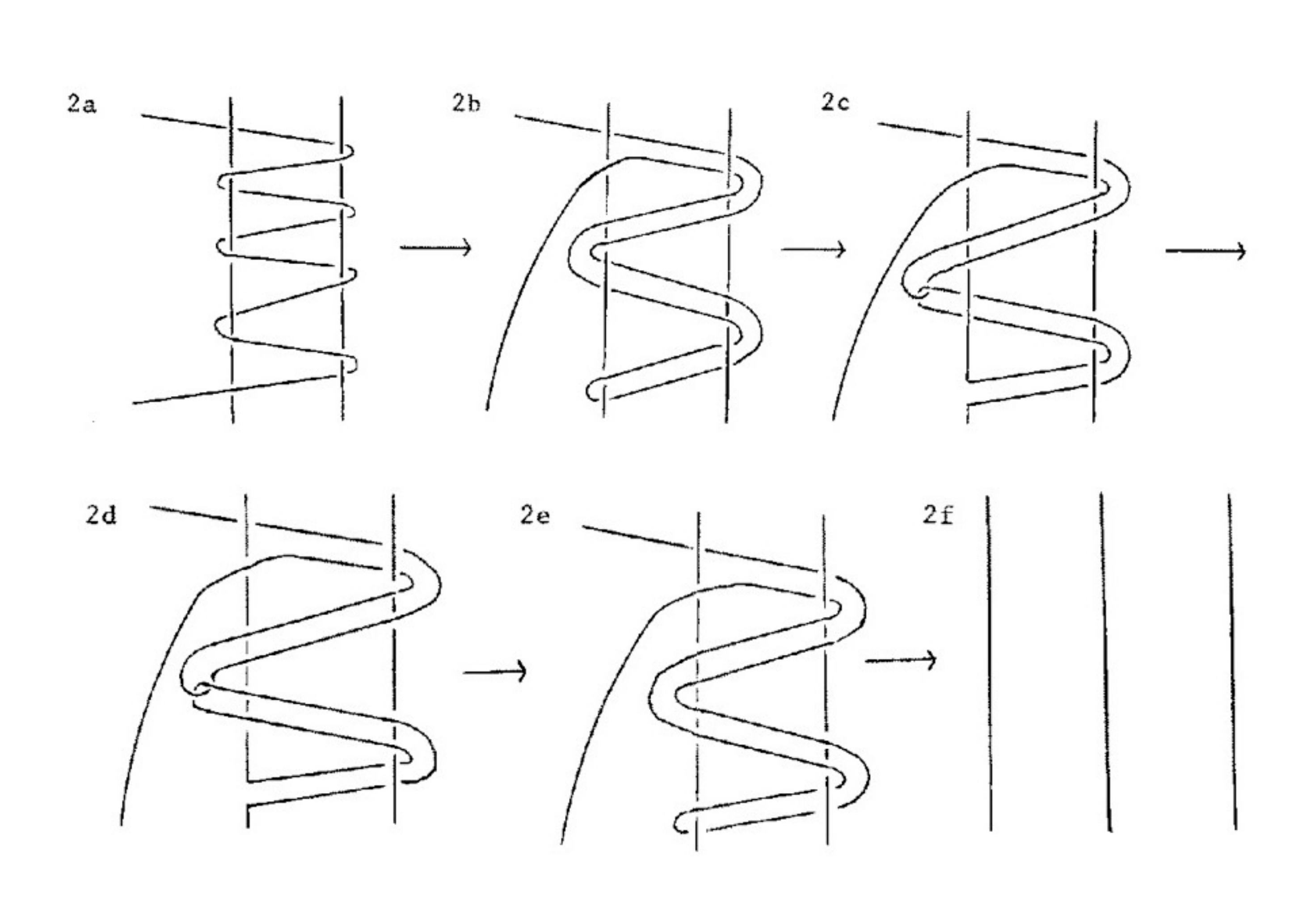}}
\caption{}
%\includepdf[pages=-]{g.pdf}
%\includegraphics%[width=1.0\textwidth,natwidth=610,natheight=642]{g.pdf}
\end{figure}
%[width=1.0\textwidth]{fig_1-2.pdf}
%\caption{Test}
%{]]\\
This braid is expressed in the canonical generators of the classical braid group in the 
following form:

$$
\sigma_1\sigma_2^{2}\sigma_1^{2}\sigma_2^{-2}\sigma_1^{-2}\sigma_2^{2}
\sigma_1^{-2}\sigma_2^{-2}\sigma_1.
$$
%$$
%\sigma_1^{-1}\sigma_2^{-2}\sigma_1^{-2}\sigma_2^{2}\sigma_1^{2}\sigma_2^{-2}
%\sigma_1^{2}\sigma_2^{2}\sigma_1^{-1}.
%$$
Thus, the study of homotopy braids seem to bes interesting in itself.

In this work we consider several questions concerning homotopy braids.
The paper is organized as follows.
In section~2 we basically recollect necessary facts about the reduced free groups. In section~3 we prove the linearity of
$\widehat{B}_n$ over $\Z$.
Unfortunately we don't know a concrete representation in $GL_m(\Z)$.
We then prove that the %factorization of 
Burau representation cannot serve for these purposes.
In section~4 we prove that the group  $\widehat{B}_3$ is torsion free.
Several questions are proposed in the last section.

\section{Reduced free group and homotopy braid group}

For elements $a, b$ of arbitrary group $G$ we will use the following notations
$$
a^b = b^{-1} a b,~~~[a, b] = a^{-1} b^{-1} a b.
$$

Recall some facts from the theory of braids (see, for example, \cite{Art1}, \cite{Art2}, \cite{Bi}, \cite{Mar}). 
The braid group $B_n$, $n\geq 2$, on $n$ strings can be defined as
a group generated by $\sigma_1,\sigma_2,\ldots,\sigma_{n-1}$ with the defining relations
\begin{center}
$\sigma_i \, \sigma_{i+1} \, \sigma_i = \sigma_{i+1} \, \sigma_i \, \sigma_{i+1},~~~ i=1,2,\ldots,n-2, $
\end{center}
\begin{center}
$\sigma_i \, \sigma_j = \sigma_j \, \sigma_i,~~~|i-j|\geq 2. $
\end{center}

There exists a homomorphism of $B_n$ onto the symmetric group $S_n$ on
$n$ letters. This homomorphism  maps
 $\sigma_i$ to the transposition  $(i,i+1)$, $i=1,2,\ldots,n-1$.
The kernel of this homomorphism is called the
{\it pure braid group} and it is denoted by
$P_n$. The group $P_n$ is generated by  the elements $a_{ij}\in B_n$, $1\leq i < j\leq n$.
These
generators can be expressed by the generators of
 $B_n$ as follows
$$
a_{i,i+1}=\sigma_i^2,
$$
$$
a_{ij} = \sigma_{j-1} \, \sigma_{j-2} \ldots \sigma_{i+1} \, \sigma_i^2 \, \sigma_{i+1}^{-1} \ldots
\sigma_{j-2}^{-1} \, \sigma_{j-1}^{-1},~~~i+1< j \leq n.
$$

The subgroup $P_n$ is normal in $B_n$, and the quotient $B_n / P_n$ is the symmetric group $S_n$. 
The generators of $B_n$ act on the generator $a_{ij} \in P_n$ by the rules:
 \begin{center}
$\sigma_k^{-1} a_{ij} \sigma_k =  a_{ij},  ~\mbox{for}~k \not= i-1, i, j-1, j,$ \\
$\sigma_{i}^{-1} a_{i,i+1} \sigma_{i} =  a_{i,i+1}, $  \\
$ \sigma_{i-1}^{-1} a_{ij} \sigma_{i-1} =   a_{i-1,j},$   \\
$ \sigma_{i}^{-1} a_{ij} \sigma_{i} =  a_{i+1,j} [a_{i,i+1}^{-1}, a_{ij}^{-1}],  ~\mbox{for}~j \not= i+1, $\\
$ \sigma_{j-1}^{-1} a_{ij} \sigma_{j-1} =  a_{i,j-1},$   \\
 $\sigma_{j}^{-1} a_{ij} \sigma_{j} =  a_{ij} a_{i,j+1} a_{ij}^{-1}.$
\end{center}

Let us denote by
$$
U_{i} = \langle a_{1i}, a_{2i}, \ldots, a_{i-1,i} \rangle,~~~i = 2, \ldots, n,
$$
the subgroup of $P_n$.
It is known that $U_i$ is the free group of rank $i-1$. The pure braid group  $P_n$ can be given by the relations  (for $\varepsilon = \pm 1$):
 \begin{center}
$a_{ik}^{-\varepsilon} a_{kj}  a_{ik}^{\varepsilon} = (a_{ij} a_{kj})^{\varepsilon} a_{kj} (a_{ij} a_{kj})^{-\varepsilon}, $ \\
$ a_{km}^{-\varepsilon} a_{kj}  a_{km}^{\varepsilon} = (a_{kj} a_{mj})^{\varepsilon} a_{kj} (a_{kj} a_{mj})^{-\varepsilon},  ~\mbox{for}~m < j, $ \\
$ a_{im}^{-\varepsilon} a_{kj}  a_{im}^{\varepsilon} = [a_{ij}^{-\varepsilon}, 
a_{mj}^{-\varepsilon}]^{\varepsilon} a_{kj} [a_{ij}^{-\varepsilon}, a_{mj}^{-\varepsilon}]^{-\varepsilon},  
~\mbox{for}~i < k < m, $ \\
$a_{im}^{-\varepsilon} a_{kj} a_{im}^{\varepsilon} = a_{kj},  ~\mbox{for}~k < i < m < j ~\mbox{or}~  m < k. $
 \end{center}
The group $P_n$ is the semi--direct product of  the normal subgroup
$U_n$ and $P_{n-1}$. Similarly, $P_{n-1}$ is the semi--direct product of the free group
$U_{n-1}$  and $P_{n-2},$ and so on.
Therefore, $P_n$ is decomposable  into the following semi--direct products
$$
P_n=U_n\rtimes (U_{n-1}\rtimes (\ldots \rtimes
(U_3\rtimes U_2))\ldots),~~~U_i\simeq F_{i-1}, ~~~i=2,3,\ldots,n.
$$

Let $F_n =F(x_1,\dots, x_n)$ be the free group on generators $x_1, \dots, x_n$.
We denote
by  $K_n$   the quotient group of $F_n$ by the relations 
$$
[x_i, x_i^g] = 1,   \   i  = 1, \dots, n,  %   g\in F_n.
$$
where $g$ is an arbitrary element of $F_n$.
The group $K_n$ is called the {\it reduced} {\it free group}. 
 So, it is the quotient group of the free group  obtained by adding relations which express that each $x_i$
 commutes with all of its conjugates.
This group can be characterized also the following way.
Let $X_i$ be the normal subgroup of $F_n$   generated by $x_i$ and let $[X_i]$
be the commutator subgroup of $X_i$. Then $N_n = [X_1 ] \dots [X_n]$ is also the normal
subgroup of $F_n$ and 
%We denote by  
$K_n$ is  the quotient group $F_n / N_n$. %We call $K_n$ the {\it reduced free group}. 
  This group  was introduced by
 J.~Milnor \cite{Mi} and studied by Habegger \& Lin \cite{HL}, F.~Cohen~\cite{C2} and  F.~Cohen \& Jie Wu \cite{CW}.

Let $a_{i,j}$ be the standard %(Burau) 
generators of the pure braid group mentioned above.
Recall that the homotopy braid group $\widehat{B}_n$ is the quotient of the braid group $B_n$ by the relations
$$
[a_{ik}, a_{ik}^g] = 1, ~\mbox{where}~g \in \langle a_{1k}, a_{2k}, \ldots, a_{k-1,k} \rangle, 1 \leq i < k \leq n.
$$
Let us denote by $\phi$ the canonical epimorphism from the standard braid group to the homotopy braid group
$$
\phi:B_n\to \widehat{B}_n. 
$$
The quotient of the pure braid group $P_n$ by the same relations gives us the pure homotopy braid group 
$\widehat{P}_n$ and from the standard short exact sequence for $B_n$ we have the following short exact sequence
$$
1 \longrightarrow \widehat{P}_n \longrightarrow \widehat{B}_n \longrightarrow S_n \longrightarrow 1,
$$
%for $\widehat{B}_n$, 
where $S_n$ is the symmetric group. The group $\widehat{P}_n$ has the decomposition 
$\widehat{P}_n = \widehat{U}_n  \rtimes \widehat{P}_{n-1}$,
where $\widehat{U}_n$ is the quotient of the free group $U_n = \langle a_{1n}, a_{2n}, \ldots, a_{n-1, n} \rangle$ of rank $n-1$ by the relations
$$
[a_{in}, a_{in}^g] = 1, ~\mbox{where}~g \in U_n, 1 \leq i < k \leq n,
$$
(see \cite{HL}).
Note, that $\widehat{U}_n$ is isomorphic to $K_{n-1}$.
In particular, $\widehat{U}_2$ is isomorphic to the infinite cyclic group and $\widehat{U}_3$ is the quotient of $U_3 = \langle a_{13}, a_{23} \rangle$
by the relations
$$
a_{13} \cdot a_{23}^{-1} a_{13} a_{23} = a_{23}^{-1} a_{13} a_{23}  \cdot  a_{13},
$$
$$
a_{23} \cdot a_{13}^{-1} a_{23} a_{13} = a_{13}^{-1} a_{23} a_{13}  \cdot  a_{23}.
$$

It was proved by Cohen and Wu \cite{CW} that the canonical Artin monomorphism
 $$
\nu_n: B_n  \hookrightarrow \operatorname{Aut} F_n
 $$
induces a homomorphism

 $$
 \hat \nu_n:\widehat B_n \to\operatorname{Aut} K_n.
 $$
It is known that $\hat\nu_n$ is a monomorphism. 
See, for example,  the thesis of
 Liu Minghui \cite{Ming}.

Since $K_n$ is a finitely generated nilpotent group of class $n$ 
\cite[Lemma 1.3]{HL}, \cite{Le}, then it follows from the result of A.I.Mal'cev \cite{Ma}  that
the word problem is decidable  in $K_n$. From the fact that $\widehat{B}_n$ is a finite extension of $\widehat{P}_n$ 
it follows that the word problem is decidable  in
$\widehat{B}_n$. In the next section we prove a stronger result that the group $\widehat{B}_n$ is  linear 
over $\Z$.
 
\section{Linearity problem}

\subsection{Faithful linear representation}

Recall that a group $G$ is called {\it linear} if it has a faithful representation into the general linear group $GL_m(k)$ for some $m$ and a field $k$.
In the works \cite{Big} and \cite{Kra} it was proved that the braid group $B_n$ is linear for every $n \geq 2$. 
So, it is natural to ask whether $\widehat{B}_n$ is linear.

\begin{thm}
The homotopy braid group $\widehat{B}_n$ is linear for all $n \geq 2$. Moreover, for every 
$n \geq 2$ there is a natural $m$ such that there exists a faithful representation $\widehat{B}_n \longrightarrow GL_m(\mathbb{Z})$.
\end{thm}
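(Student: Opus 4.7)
My plan is to show that $\widehat{B}_n$ is virtually polycyclic and then invoke the Auslander--Swan theorem, which states that every polycyclic-by-finite group admits a faithful representation into $GL_m(\mathbb{Z})$ for some $m$. This fits with the authors' remark that a concrete $m$ is not known, since Auslander--Swan is non-constructive.

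The first step would be to collect the structural input that is already recorded in the excerpt. The group $K_n$ is finitely generated and nilpotent of class $n$ (Habegger--Lin), hence polycyclic; since $\widehat{U}_i \cong K_{i-1}$, each factor in the iterated semidirect product
$$\widehat{P}_n = \widehat{U}_n \rtimes \bigl(\widehat{U}_{n-1} \rtimes (\cdots \rtimes (\widehat{U}_3 \rtimes \widehat{U}_2))\bigr)$$
is polycyclic. Because an extension of a polycyclic group by a polycyclic group is polycyclic, an easy induction on $n$ yields that $\widehat{P}_n$ is polycyclic.

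Next I would use the short exact sequence
$$1 \longrightarrow \widehat{P}_n \longrightarrow \widehat{B}_n \longrightarrow S_n \longrightarrow 1$$
to conclude that $\widehat{B}_n$ is polycyclic-by-finite. At this point, the Auslander--Swan theorem (Auslander, \emph{Ann. of Math.} 1967; Swan, \emph{Proc. AMS}) applies directly and produces a faithful embedding $\widehat{B}_n \hookrightarrow GL_m(\mathbb{Z})$ for some integer $m$, proving both assertions of the theorem.

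The real content is therefore not in any one delicate argument, but in assembling the ingredients correctly: the nilpotence of $K_n$, the semidirect-product decomposition of $\widehat{P}_n$ given in the previous section, and the Auslander--Swan representation theorem. The main conceptual obstacle --- the one reason the argument is non-trivial despite being short --- is the absence of an effective bound on $m$: polycyclic-by-finite groups are linear over $\mathbb{Z}$ but the classical proof offers no recipe for the representation, which is exactly why the paper only obtains an abstract existence statement and separately notes that the Burau representation fails to give a concrete one.
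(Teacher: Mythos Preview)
Your argument is correct, but it is not the route the paper takes. The paper does \emph{not} show that $\widehat{B}_n$ itself is polycyclic-by-finite. Instead it uses the embedding $\hat\nu_n:\widehat{B}_n\hookrightarrow\operatorname{Aut}(K_n)$ recorded in Section~2, observes that $K_n$ is finitely generated nilpotent and therefore polycyclic, and then invokes Merzljakov's theorem that the holomorph $\operatorname{Hol}(K_n)=K_n\rtimes\operatorname{Aut}(K_n)$ of a polycyclic group is $\mathbb{Z}$-linear; since $\operatorname{Aut}(K_n)$ sits inside the holomorph, $\widehat{B}_n$ is $\mathbb{Z}$-linear. So both proofs start from the nilpotence of $K_n$, but the paper goes externally through the faithful action on $K_n$ and Merzljakov, whereas you go internally through the semidirect-product decomposition $\widehat{P}_n=\widehat{U}_n\rtimes\widehat{P}_{n-1}$ and apply Auslander--Swan directly to $\widehat{B}_n$. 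Your route has the advantage of not depending on the injectivity of $\hat\nu_n$ (which the paper imports from a thesis) and of using only the more classical Auslander--Swan statement; the paper's route, on the other hand, needs nothing about the internal filtration of $\widehat{P}_n$ beyond the single fact that $\widehat{B}_n$ acts faithfully on one polycyclic group. One small caution: Auslander--Swan as originally stated is for polycyclic groups, and you are applying it to a polycyclic-by-finite group; this extension is standard (and in fact follows from the Merzljakov result the paper cites, or from later expositions such as Segal's), but it is worth flagging the attribution explicitly.
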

\begin{proof}
As it was mentioned above, the reduced free group $K_n$, $n \geq 2$ is nilpotent.  Finitely generated nilpotent group is polycyclic and hence
it can be represented by integer matrices \cite{Aus, Sw}.
It was proved in \cite{Mer} that the holomorph of every polycyclic group has a faithful representation into 
$GL_m(\mathbb{Z})$ for some $m$. Hence, holomorph $\operatorname{Hol}(K_n)$ has a faithful representation 
into $GL_m(\mathbb{Z})$ for some $m$. But $\operatorname{Hol}(K_n)$ contains 
$\operatorname{Aut}(K_n)$ as a subgroup and so, $\widehat{B}_n$ is embedded into  $\operatorname{Aut}(K_n)$.
\end{proof}

It is interesting to find a faithful linear representation of $\widehat{B}_n$ explicitly. 
One can try to factor through $\widehat{B}_n$ the known representations of $B_n$, 
for example, Burau representation, Lawrence-Krammer-Bigelow representation or other ones.

%\subsection{Non-factorization of the Burau representation through \texorpdfstring{$\widehat{B}_n$}{Lg}}
\subsection{Burau representation and  the homotopy braid groups}
%	 \texorpdfstring{$\widehat{B}_n$}{Lg}}
Let
$$
\rho_B : B_n \longrightarrow GL(W_n)
$$
be the Burau representation of $B_n$, where $W_n$ is a free $\mathbb{Z}[t^{\pm 1}]$-module of rank $n$ with the basis $w_1, w_2, \ldots, w_n$.
Let $n=3$. In this case the automorphisms $\rho_B (\sigma_i)$, $i = 1, 2$, of module $W_3$ act by the rule
$$
\sigma_1 :
\left\{
\begin{array}{l}
w_1 \longmapsto (1-t) w_1 + t w_2, \\
w_2 \longmapsto w_1, \\
w_3 \longmapsto w_3,
\end{array}
\right.
~~~~
\sigma_2 :
\left\{
\begin{array}{l}
w_1 \longmapsto w_1,\\
w_2 \longmapsto (1-t) w_2 + t w_3, \\
w_3 \longmapsto w_2, \\
\end{array}
\right.
$$
where we write for simplicity $\sigma_i$ instead of $\rho_B (\sigma_i)$. Let us find the action of the generators of $P_3$ on the module $W_3$.
Recall, that $P_3 = U_2 \rtimes U_3$, where $U_2$ is the infinite cyclic group with
the generator $a_{12} = \sigma_1^2$, $U_3$ is the free group of rank $2$ with the free generators
$$
a_{13} = \sigma_2 \sigma_1^2 \sigma_2^{-1},~~~a_{23} = \sigma_2^2.
$$
These elements define the following automorphisms of $W_3$
\begin{equation} \label{a12}
a_{12} :
\left\{
\begin{array}{l}
w_1 \longmapsto (1-t+t^2) w_1 + t (1 - t) w_2, \\
w_2 \longmapsto (1 - t) w_1 + t w_2, \\
w_3 \longmapsto w_3,
\end{array}
\right.
\end{equation}
\begin{equation} \label{a13}
a_{13} :
\left\{
\begin{array}{l}
w_1 \longmapsto (1 - t + t^2) w_1 + t (1 - t) w_3,\\
w_2 \longmapsto (1-t)^2 w_1 +  w_2 - (1 - t)^2 w_3, \\
w_3 \longmapsto (1 - t) w_1 + t w_3, \\
\end{array}
\right.
\end{equation}
\begin{equation} \label{a23}
a_{23} :
\left\{
\begin{array}{l}
w_1 \longmapsto w_1, \\
w_2 \longmapsto (1-t+t^2) w_2 + t (1 - t) w_3, \\
w_3 \longmapsto (1 - t) w_2 + t w_3, \\
\end{array}
\right.
\end{equation}
\begin{equation} \label{a23-1}
a_{23}^{-1} :
\left\{
\begin{array}{l}
w_1 \longmapsto  w_1,\\
w_2 \longmapsto t^{-1} w_2 +  (1 - t^{-1}) w_3, \\
w_3 \longmapsto t^{-1} (1 - t^{-1}) w_2 + (1 - t^{-1} + t^{-2}) w_3. \\
\end{array}
\right.
\end{equation}
Let us denote by $\widehat{\rho}_B$ the representation (if it exists)
$$
\widehat{\rho}_B : \widehat{B}_n \longrightarrow GL(W_n)
$$
%which is the factorization of $\rho_B$ through $\widehat{B}_n$, 
such that 
\begin{equation}
%$$
\rho_B= \widehat{\rho}_B \circ\phi : B_n\to GL(W_n).
\label{rho_tilde}
\end{equation}
	%$$
%We shall prove
\begin{prop}
For $n=3$ the 
%factorization of the 
representation $\widehat{\rho}_B$ such that condition \ref{rho_tilde} holds  exists only if 
we consider the specialization of the Burau representation with $t=1$. In this case $\widehat{\rho}_B$
is trivial on $\widehat{P}_3$.
Hence, the image $\widehat{\rho}_B(B_3)$ is isomorphic to the symmetric group $S_3$.
\end{prop}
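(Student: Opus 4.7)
The plan is to check when the Burau representation $\rho_B$ respects the additional defining relations of $\widehat{B}_3$ beyond those of $B_3$. The only such relators are the two from $\widehat{U}_3$, namely $[a_{13},\,a_{23}^{-1} a_{13} a_{23}] = 1$ and $[a_{23},\,a_{13}^{-1} a_{23} a_{13}] = 1$. By symmetry between the two, I focus on the first; the question becomes: for which $t$ does the matrix commutator
\[
C := \rho_B(a_{13})\,M - M\,\rho_B(a_{13}), \qquad M := \rho_B(a_{23})^{-1}\,\rho_B(a_{13})\,\rho_B(a_{23}),
\]
vanish?

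Using the explicit formulas (\ref{a13}), (\ref{a23}), (\ref{a23-1}), I first compute $M$ by direct matrix multiplication. The observation that $\rho_B(a_{23})$ fixes $w_1$ makes the computation manageable; among the entries, one finds $M_{2,1} = -t^{-1}(1-t)^2$, $M_{1,2} = (1-t)^2(1+t^2)$, and $M_{1,3} = (1-t)(1-t+t^2)$. Forming $C$ entry by entry and simplifying, the $(1,1)$-entry collapses to $(1-t)^3(1+t^2)$ and the $(2,1)$-entry to $-t^{-1}(1-t)^3(1+t^2)$; in particular $C$ is not the zero matrix over $\mathbb{Z}[t^{\pm 1}]$, so $\rho_B$ cannot factor through $\widehat{B}_3$ generically. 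To force $C = 0$ one must specialize $t$ to a value annihilating every entry of $C$. Over $\mathbb{Z}$, or more generally any subring of $\mathbb{R}$, the factor $1+t^2$ is never zero, which leaves $t = 1$ as the only admissible specialization.

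It remains to verify that $t = 1$ does produce a well-defined $\widehat{\rho}_B$ and to identify its image. Setting $t = 1$ in (\ref{a12})--(\ref{a23-1}), each $\rho_B(\sigma_i)|_{t=1}$ becomes the permutation matrix of the transposition $(i,i+1)$; hence $\rho_B(\sigma_i^2)|_{t=1} = I$, and every generator $a_{ij}$ of $P_3$ is killed. Consequently $\widehat{P}_3$, being a quotient of $P_3$, lies in the kernel of $\widehat{\rho}_B$, and the induced map descends to the faithful permutation representation of $\widehat{B}_3/\widehat{P}_3 \cong S_3$, whose image is the order-six subgroup of permutation matrices in $GL_3(\mathbb{Z})$. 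The main obstacle is the bookkeeping in the matrix product producing $M$ and the commutator $C$: the entries are cumbersome Laurent polynomials in $t$, and the essential step is to recognize and extract the common factor $(1-t)^3(1+t^2)$ from the relevant entries of $C$, after which both the forced specialization and the identification of the image are immediate.
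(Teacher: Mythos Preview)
Your approach is the paper's: compute the matrix obstruction to the relation $[a_{13},a_{13}^{a_{23}}]=1$ in the Burau image and determine for which $t$ it vanishes; the paper writes out both products $a_{13}a_{13}^{a_{23}}$ and $a_{13}^{a_{23}}a_{13}$ on $w_1,w_2,w_3$ and equates entries, obtaining exactly your difference polynomials. Your factorization of the $(1,1)$-entry of $C$ as $(1-t)^3(1+t^2)$ is correct (it is the paper's first equation $1-3t+4t^2-4t^3+3t^4-t^5=0$), and likewise for the $(2,1)$-entry.

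There is one small gap. You exhibit only entries of $C$ divisible by $(1-t)^3(1+t^2)$ and then dispose of the factor $1+t^2$ by restricting to subrings of $\mathbb{R}$; but the proposition makes no such restriction, and from those two entries alone a complex specialization $t=\pm i$ is not excluded. The paper's full system contains the equation $t(1-t)^5=0$, which is (up to sign) the $(1,3)$-entry of your $C$; since $t$ is a unit in $\mathbb{Z}[t^{\pm1}]$, this forces $t=1$ over any integral domain with no appeal to reality. Recording that one additional entry closes the argument.
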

\begin{proof}
To obtain a representation  $\widehat{\rho}_B(B_3)$ we must have
the following relations among the automorphisms $a_{i,j}$ (\ref{a12})-(\ref{a23}) of $W_3$:
$$
[a_{13}, a_{13}^{a_{23}}] = 1,~~~[a_{23}, a_{23}^{a_{13}}] = 1,
$$
which are equivalent to the following relations
$$
a_{13} a_{13}^{a_{23}} = a_{13}^{a_{23}}  a_{13},~~~a_{23} a_{23}^{a_{13}} = a_{23}^{a_{13}}  a_{23}.
$$
From the expressions of the automorphisms (\ref{a12})-(\ref{a23-1}) we obtain
$$
a_{23}^{-1} a_{13} a_{23} :
\left\{
\begin{array}{l}
w_1 \longmapsto  (1 - t + t^2) w_1 + t (1-t)^2 w_2 + t^2 (1-t) w_3,\\
w_2 \longmapsto  w_2, \\
w_3 \longmapsto t^{-1} (1 - t) w_1 - t^{-1} (1 - t)^{2} w_2 + t w_3, \\
\end{array}
\right.
$$
and
$$
a_{13} a_{13}^{a_{23}} :
\left\{
\begin{array}{ll}
w_1 \longmapsto & (2 - 4t + 4t^2 - 2t^3 + t^4) w_1 + (1-t)^2 (-1 - t^2 + t^3) w_2 +\\
 & + t^2 (1-t) (2-t+t^2) w_3,\\
w_2 \longmapsto & (1 - t)^2 (- t^{-1} + 2 - t + t^2) w_1 + [(1-t)^4 (t + t^{-1}) + 1] w_2 +\\
 & + t (1-t)^2 (-1 + t - t^2) w_3, \\
w_3 \longmapsto & (1 - t) (2 - t + t^2) w_1 + (1 - t)^2 [-1 + t - t^{2}] w_2 +\\
&  + t^2 (2 - 2 t + t^2) w_3, \\
\end{array}
\right.
$$
$$
a_{13}^{a_{23}} a_{13} :
\left\{
\begin{array}{ll}
w_1 \longmapsto & (1 - t + 2 t^3 - 2t^4 + t^5) w_1 + t(1-t)^2 w_2 +\\
& + t (1-t) (1 - 2t + 5t^2 - 3 t^3 + t^4) w_3,\\
w_2 \longmapsto & (1 - t)^2 w_1 +  w_2 - (1-t)^2 w_3, \\
w_3 \longmapsto  & (1 - t) (2 - t + t^2) w_1 - t^{-1} (1 - t)^2  w_2 + \\
& + [(1-t)^2 (1 + t - 2 t^2 + t^3) + t^2] w_3. \\
\end{array}
\right.
$$
In order to satisfy relation $a_{13} a_{13}^{a_{23}} = a_{13}^{a_{23}} a_{13}$
the following system of equations should have a solution
$$
\left\{
\begin{array}{l}
1 - 3 t + 4t^2 - 4t^3 + 3t^4 - t^5 = 0,\\
(1 - t)^2 (-1 - t - t^2 + t^3) = 0, \\
t (1-t)^5 = 0, \\
(1 - t)^2 (-t^{-1} + 1 - t + t^2) = 0, \\
t^{-1} (1 - t)^4 (1 + t^2) = 0, \\
(1 - t)^2 (1 - t + t^2 - t^3) = 0, \\
(1 - t)^2 (-1 + t - t^2 + t^{-1}) = 0, \\
1 - t - 4t^2 + 8 t^3 - 5 t^4 + t^5 = 0. \\
\end{array}
\right.
$$
This system has a solution only if $t = 1$. In this case, automorphisms $a_{12}$, $a_{13}$, $a_{23}$ are equal to the identity automorphism.
\end{proof}

\subsection{Linear representation of \texorpdfstring{$K_n$}{Lg}}
 We know that $K_2$ is the
free 2-step nilpotent group of rank 2. Hence, for every non-zero integers $a$
and $b$ the map of $K_2$ defined on the generators $x_1$, $x_2$ by the formulas
$$
x_1 \longmapsto A =
\left(
  \begin{array}{ccc}
    1 & a & 0 \\
    0 & 1 & 0 \\
        0 & 0 & 1 \\
  \end{array}
\right),~~~
x_2 \longmapsto B =
\left(
  \begin{array}{ccc}
    1 & 0 & 0 \\
    0 & 1 & b \\
        0 & 0 & 1 \\
  \end{array}
\right)
$$
defines a faithful representation to the unitriangular group
 $$K_2 \longrightarrow UT_3(\mathbb{Z}).$$
Note that in this case
$$
[x_1, x_2] \longmapsto [A, B] =
\left(
  \begin{array}{ccc}
    1 & 0 & a b \\
    0 & 1 & 0 \\
        0 & 0 & 1 \\
  \end{array}
\right).
$$

\begin{question}
Is there exist a faithful representation of $K_n$, $n > 2$ into $UT_n(\mathbb{Z})$?
\end{question}
%For $K_3 = \langle x_1, x_2, x_3 \rangle$ holds
\begin{lem}
 The following basis commutators of weight 3 are trivial in $K_3$
$$
[[x_2, x_1], x_1],~~[[x_2, x_1], x_2],~~[[x_3, x_1], x_1],~~[[x_3, x_1], x_3],~~[[x_3, x_2], x_2],~~[[x_3, x_2], x_3],
$$
and the following basis commutators are non-trivial
$$
[[x_2, x_1], x_3],~~[[x_3, x_1], x_2]. %\quad \square
$$
\end{lem}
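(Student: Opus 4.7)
The plan is to split the lemma into two halves: the six vanishings follow by rewriting the defining relations of $K_3$ as a commutator identity, while the non-vanishings require a computation in the graded Lie algebra associated with the lower central series.

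For the vanishings I first reformulate the relation $[x_i, x_i^g] = 1$. Using $x_i^g = x_i \cdot [x_i, g]$ together with the standard identity $[a, bc] = [a, c] \cdot [a, b]^c$, one sees that $[x_i, x_i^g] = [x_i, [x_i, g]]$, so the defining relations are equivalent to $[x_i, [x_i, g]] = 1$ for every $g \in F_n$. Taking $g = x_j$ shows that $[x_i, x_j]$, and therefore also $[x_j, x_i] = [x_i, x_j]^{-1}$, commutes with $x_i$ in $K_3$; symmetrically $[x_i, x_j]$ commutes with $x_j$. Each commutator in the first list has the form $[[x_a, x_b], x_c]$ with $c \in \{a, b\}$, so it vanishes by one of these two statements.

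For the non-vanishings I would pass to $\gamma_3(K_3)/\gamma_4(K_3)$. By the Habegger-Lin result cited above, $K_3$ is nilpotent of class $3$, so $\gamma_4(K_3) = 1$ and it is enough to show non-triviality in $\gamma_3(K_3)$. Since $\gamma_3(K_3)$ is central, the normal subgroup of $F_3/\gamma_4(F_3)$ generated by the defining relations coincides with the $\Z$-submodule they span; moreover, $g \mapsto [x_i, [x_i, g]]$ is a group homomorphism $F_3 \to \gamma_3(F_3)/\gamma_4(F_3)$ that factors through the abelianization $\Z^3$, so only $g \in \{x_1, x_2, x_3\}$ contribute. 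The Witt formula makes $\gamma_3(F_3)/\gamma_4(F_3)$ free abelian of rank $8$, with the Hall basis
$$
\bigl\{[[x_i, x_j], x_k] : i > j,\ j \leq k\bigr\}.
$$
A direct rewriting (using $[a, b] = -[b, a]$ in the Lie algebra and the antisymmetry of the bracket) shows that the six Lie elements $[x_i, [x_i, x_j]]$ coincide, up to signs, with the six Hall basis elements listed as trivial in the statement. Therefore the relation submodule is a signed diagonal of rank $6$, and $\gamma_3(K_3) \cong \Z^2$ is freely generated by the images of $[[x_2, x_1], x_3]$ and $[[x_3, x_1], x_2]$, which are in particular non-trivial in $K_3$.

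The main obstacle is the second half. It rests on two facts I would verify carefully: that $\gamma_3(K_3)$ is central (so conjugation in the normal closure drops away), and that the longer relations $[x_i, x_i^g]$ with $g$ an arbitrary word produce no new constraints in weight $3$. Both follow from class-$3$ nilpotency together with the multilinearity of the Lie bracket modulo $\gamma_4$, reducing the nontriviality to the short linear-algebra calculation in the Hall basis just described.
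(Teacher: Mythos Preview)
Your argument is correct. The triviality half is essentially the same as the paper's: both rewrite the defining relation $[x_i,x_i^g]=1$ as a commutator identity and specialize $g=x_j$. The difference is only cosmetic.

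For the non-triviality half you take a genuinely different route. The paper embeds $K_3$ into $\widehat P_4$ via $x_i\mapsto a_{i4}$ and then invokes the faithfulness of $\hat\nu_4:\widehat B_4\to\Aut(K_4)$, reducing the question to checking that a specific automorphism of $K_4$ is non-identity. Your approach is internal: you use the cited bound $\gamma_4(K_3)=1$ to identify $\gamma_3(K_3)$ with the quotient of $\gamma_3(F_3)/\gamma_4(F_3)\cong\Z^8$ by the span of the six Lie words $[x_i,[x_i,x_j]]$, and read off that the two remaining Hall basis commutators survive. Your argument is more self-contained (it avoids both the Habegger--Lin embedding $K_{n-1}\hookrightarrow\widehat P_n$ and the faithfulness result for $\hat\nu_n$ from the Liu thesis) and gives the extra information that $\gamma_3(K_3)\cong\Z^2$; the paper's approach, on the other hand, illustrates the utility of the representation $\hat\nu_n$ that is a running theme of the paper. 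Both are valid; yours is arguably the cleaner proof of the bare lemma.
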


\begin{proof}
Let us prove, for example, that $[[x_2, x_1], x_2] = 1$. Indeed, using commutator identities we have
$$
[[x_2, x_1], x_2] = [x_2^{-1} x_1^{-1} x_2 x_1, x_2] = [x_2^{-1} x_2^{x_1}, x_2] = [x_2^{-1}, x_2] [x_2^{x_1}, x_2]^{x_2^{-1}} = 1.
$$
To prove that the commutator $[[x_2, x_1], x_3]$ is non-trivial, we define an embedding of  $K_3$ into  $\widehat P_4$ (see \cite{HL}) by the rules
$$
x_1 \to a_{14},~~x_2 \to a_{24},~~x_3 \to a_{34}.
$$
Then  using  the fact that
the homomorphism
 $$
 \hat \nu_n:\widehat B_n \to\operatorname{Aut} K_n,
 $$
 is a monomorphism \cite{Ming},
we find the automorphism $\hat \nu_4 ([[a_{24}, a_{14}], a_{34}])$ and check that it is non-identity.
 The proofs of the other statements are similar.
\end{proof}

\section{Torsion  in \texorpdfstring{$\widehat{B}_n$}{Lg}}

V.Ya.~Lin formulated the following question in the Kourovka Notebook \cite{Kourovka}.
\begin{question}(V.Lin, Question 14.102 c))
Is there a non-trivial epimorphism of $B_n$ onto a  non-abelian
group without torsion?
\end{question}

The answer to this question  can be found in
\cite{L}

We conjecture that the group $\widehat{B}_n$, $n \geq 3$, does not
have torsion and since there exists the epimorphism
$B_n \longrightarrow \widehat{B}_n$,  the group $\widehat{B}_n$ will be
another example that answers Lin's question.
 In this section we  prove that $\widehat{B}_3$
 does not have torsion.

Let $\widehat{P}_3$, $\widehat{U}_2$, $\widehat{U}_3$ be the
images of $P_3$, $U_2$, $U_3$
by the canonical  epimorphism $B_3 \longrightarrow \widehat{B}_3$.
Denote by $b_{ij}$, $1 \leq i < j \leq 3$ the images of $a_{ij}$,
$1 \leq i < j \leq 3$, by this epimorphism. Then
$\widehat{U}_2 = \langle b_{12} \rangle$ is the
infinite cyclic group and
$$
\widehat{U}_3 = \langle b_{13}, b_{23}~||~ [b_{13}, b_{13}^{b_{23}}] =
[b_{23}, b_{23}^{b_{13}}] = 1 \rangle =
$$
$$
= \langle b_{13}, b_{23}~||~ [b_{13}, b_{13} [b_{13}, b_{23}]] =
[b_{23}, b_{23} [b_{23}, b_{13}]] = 1 \rangle.
$$
Using commutator identities or direct calculations we see that the last two relations are equivalent to
the following relation
$$
[[b_{23}, b_{13}], b_{23}] = [[b_{23}, b_{13}], b_{13}] = 1.
$$
Hence, $\widehat{U}_3$ is a free 2-step nilpotent group of rank 2 and
so, every element $g \in \widehat{U}_3$  has a unique presentation of
 the form
$$
g = b_{13}^{\alpha} b_{23}^{\beta} [b_{23}, b_{13}]^{\gamma}
$$
for some integers $\alpha, \beta, \gamma$. The same way as in the case of
classical braid group, $\widehat{U}_3$ is a normal subgroup of
$\widehat{P}_3$ and the action of $\widehat{U}_2$ is defined in the
following lemma.
\begin{lem}
The action of $\widehat{U}_2$ on $\widehat{U}_3$ is given by the formulas
$$
b_{13}^{b_{12}^k} = b_{13} [b_{23}, b_{13}]^k,~~b_{23}^{b_{12}^k} =
b_{23} [b_{23}, b_{13}]^{-k},~~[b_{23}, b_{13}]^{b_{12}^k} =
[b_{23}, b_{13}],~~k \in \mathbb{Z}. \ \square
$$
\end{lem}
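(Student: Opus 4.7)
The proof I propose proceeds by lifting the conjugation action up to the level of $P_3$ and then exploiting the $2$-step nilpotence of $\widehat U_3$ proved just above the lemma. Recall from the pure-braid relations listed in Section~2 (the case $k=1,m=2,j=3$ and $i=1,k=2,j=3$, with $\varepsilon=1$) that in $P_3$ we have
\[
a_{12}^{-1} a_{13}\, a_{12} = (a_{13}a_{23})\, a_{13}\, (a_{13}a_{23})^{-1},\qquad
a_{12}^{-1} a_{23}\, a_{12} = (a_{13}a_{23})\, a_{23}\, (a_{13}a_{23})^{-1}.
\]
Passing to $\widehat P_3$ through the canonical epimorphism yields the analogous identities with $b_{ij}$ in place of $a_{ij}$. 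So the first step is simply to quote these.

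The second step is to simplify both right-hand sides using the fact that $\widehat U_3$ is free $2$-step nilpotent of rank $2$, so $[b_{23},b_{13}]$ is central. From $b_{13}b_{23}=b_{23}b_{13}[b_{13},b_{23}]$ (an immediate consequence of the commutator convention $[a,b]=a^{-1}b^{-1}ab$) one gets $b_{23}b_{13}b_{23}^{-1}=b_{13}[b_{23},b_{13}]$ and $b_{13}b_{23}b_{13}^{-1}=b_{23}[b_{23},b_{13}]^{-1}$; feeding these into the two conjugation formulas and using centrality of $[b_{23},b_{13}]$ to collect the commutator to the right collapses them to
\[
b_{13}^{b_{12}} = b_{13}\,[b_{23},b_{13}],\qquad b_{23}^{b_{12}} = b_{23}\,[b_{23},b_{13}]^{-1}.
\]
The third identity $[b_{23},b_{13}]^{b_{12}}=[b_{23},b_{13}]$ is then automatic: conjugation by $b_{12}$ is an automorphism of $\widehat U_3$, and from the two formulas just obtained the images of $b_{23}$ and $b_{13}$ differ from the originals by a central element, so the commutator is unchanged (alternatively, $[b_{23},b_{13}]$ lies in the center of $\widehat U_3$ and $b_{12}$ acts on $\widehat U_3$ by an automorphism that therefore preserves the center).

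The final step handles arbitrary $k\in\mathbb Z$ by a one-line induction: having shown the three formulas for $k=1$, and knowing that $[b_{23},b_{13}]$ is fixed by $b_{12}$-conjugation, we get
\[
b_{13}^{b_{12}^{k+1}} = \bigl(b_{13}[b_{23},b_{13}]^k\bigr)^{b_{12}} = b_{13}[b_{23},b_{13}]\cdot [b_{23},b_{13}]^k = b_{13}[b_{23},b_{13}]^{k+1},
\]
and likewise for $b_{23}$; the same identity applied to $b_{12}^{-1}$ (which acts as the inverse automorphism) extends to negative $k$. There is no real obstacle here — the only thing one has to watch is the sign conventions in $[a,b]=a^{-1}b^{-1}ab$ versus $b^a = a^{-1}ba$, making sure the central correction terms come out with the right sign in each of the two formulas.
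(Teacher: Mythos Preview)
Your argument is correct: quoting the pure-braid relations for $a_{12}^{-1}a_{13}a_{12}$ and $a_{12}^{-1}a_{23}a_{12}$, passing to $\widehat P_3$, and then collapsing the conjugates using centrality of $[b_{23},b_{13}]$ in the $2$-step nilpotent group $\widehat U_3$ gives exactly the three formulas for $k=1$, and the induction to general $k$ is immediate. The paper itself gives no proof of this lemma (the $\square$ after the statement signals that it is left as a routine verification), so your write-up is precisely the intended straightforward computation filled in.
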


The action of the generators $\sigma_1$ and $\sigma_2$ of $\widehat{B}_3$ on
$\widehat{P}_3$ is given in the next lemma.
\begin{lem}
 The following conjugation formulas hold in  $\widehat{B}_3$
$$
b_{12}^{\sigma_1^{\pm 1}} = b_{12}, ~~ b_{13}^{\sigma_1} = b_{23}
[b_{23}, b_{13}]^{-1},~~b_{23}^{\sigma_1} = b_{13},
b_{13}^{\sigma_1^{-1}} = b_{23},~~b_{23}^{\sigma_1^{-1}} = b_{13}
[b_{23}, b_{13}]^{-1},
$$
$$
[b_{23}, b_{13}]^{\sigma_1^{-1}} = [b_{23}, b_{13}]^{-1},
$$
$$
b_{12}^{\sigma_2} = b_{13} [b_{23}, b_{13}]^{-1},~~b_{13}^{\sigma_2} =
b_{12}, ~~ b_{23}^{\sigma_2^{\pm 1}} = b_{23},~~
b_{12}^{\sigma_2^{-1}} = b_{13},
~~b_{13}^{\sigma_2^{-1}} = b_{12} [b_{23}, b_{13}]^{-1},
$$
$$
[b_{23}, b_{13}]^{\sigma_2^{-1}} = [b_{23}, b_{13}]^{-1}. \quad \square
$$
\end{lem}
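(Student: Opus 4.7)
The overall strategy is to start from the standard conjugation formulas for $\sigma_k^{\pm 1}$ acting on the Artin generators $a_{ij}$ of $P_n$ listed in Section~2, apply the quotient map $\phi\colon B_3\to\widehat{B}_3$, and then simplify the resulting expressions using two inputs: the fact that $\widehat{U}_3$ is free $2$-step nilpotent of rank $2$, so $[b_{23},b_{13}]$ is central in $\widehat{U}_3$ and (combined with the previous lemma, which makes $b_{12}$ commute with it) also in $\widehat{P}_3$; and the formulas $b_{13}^{b_{12}^k}=b_{13}[b_{23},b_{13}]^k$, $b_{23}^{b_{12}^k}=b_{23}[b_{23},b_{13}]^{-k}$ of the previous lemma. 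A recurring tool is that $b_{12}b_{13}b_{23}$ is central in $\widehat{P}_3$, being the image of $(\sigma_1\sigma_2)^3=a_{12}a_{13}a_{23}$, which is central in $B_3$.

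Most of the identities are immediate. The rule $\sigma_i^{-1}a_{i,i+1}\sigma_i=a_{i,i+1}$ gives $b_{12}^{\sigma_1^{\pm 1}}=b_{12}$ and $b_{23}^{\sigma_2^{\pm 1}}=b_{23}$, while $\sigma_{i-1}^{-1}a_{ij}\sigma_{i-1}=a_{i-1,j}$ and $\sigma_{j-1}^{-1}a_{ij}\sigma_{j-1}=a_{i,j-1}$ yield $b_{23}^{\sigma_1}=b_{13}$, $b_{13}^{\sigma_1^{-1}}=b_{23}$, $b_{13}^{\sigma_2}=b_{12}$ and $b_{12}^{\sigma_2^{-1}}=b_{13}$. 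The four non-obvious formulas are $b_{13}^{\sigma_1}$, $b_{23}^{\sigma_1^{-1}}$, $b_{12}^{\sigma_2}$ and $b_{13}^{\sigma_2^{-1}}$. For $b_{13}^{\sigma_1}$, the rule $\sigma_i^{-1}a_{ij}\sigma_i=a_{i+1,j}[a_{i,i+1}^{-1},a_{ij}^{-1}]$ with $i=1$, $j=3$ gives $b_{13}^{\sigma_1}=b_{23}\cdot[b_{12}^{-1},b_{13}^{-1}]$; rewriting the commutator as $b_{13}^{b_{12}^{-1}}b_{13}^{-1}$ and substituting $b_{13}^{b_{12}^{-1}}=b_{13}[b_{23},b_{13}]^{-1}$ (previous lemma, $k=-1$) collapses it to $[b_{23},b_{13}]^{-1}$ by centrality. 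The formula for $b_{12}^{\sigma_2}$ is similar: the rule $\sigma_j^{-1}a_{ij}\sigma_j=a_{ij}a_{i,j+1}a_{ij}^{-1}$ with $i=1$, $j=2$ gives $b_{12}^{\sigma_2}=b_{12}b_{13}b_{12}^{-1}=b_{13}^{b_{12}^{-1}}=b_{13}[b_{23},b_{13}]^{-1}$.

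For $b_{13}^{\sigma_2^{-1}}$, I exploit $a_{13}=\sigma_2 a_{12}\sigma_2^{-1}$ to get $\sigma_2 a_{13}\sigma_2^{-1}=\sigma_2^2 a_{12}\sigma_2^{-2}=a_{23}a_{12}a_{23}^{-1}$, and then use centrality of $b_{12}b_{13}b_{23}$ to rewrite $b_{23}b_{12}b_{23}^{-1}=b_{12}\cdot b_{13}b_{23}b_{13}^{-1}b_{23}^{-1}=b_{12}[b_{13}^{-1},b_{23}^{-1}]$; bilinearity of the commutator modulo triple commutators in the free $2$-step nilpotent group $\widehat{U}_3$ identifies $[b_{13}^{-1},b_{23}^{-1}]$ with $[b_{23},b_{13}]^{-1}$. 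A parallel manipulation, or else inverting the rule used for $b_{13}^{\sigma_1}$ to get $\sigma_1 a_{23}\sigma_1^{-1}=a_{13}[a_{12}^{-1},a_{23}^{-1}]^{-1}$ and then applying the $b_{12}$-action lemma, handles $b_{23}^{\sigma_1^{-1}}$. The commutator formulas $[b_{23},b_{13}]^{\sigma_i^{-1}}=[b_{23},b_{13}]^{-1}$ then follow by substituting the computed values into $[b_{23}^{\sigma_i^{-1}},b_{13}^{\sigma_i^{-1}}]$ and simplifying: for $i=1$ one finds $[b_{13}[b_{23},b_{13}]^{-1},b_{23}]=[b_{13},b_{23}]$, and for $i=2$ one finds $[b_{23},b_{12}[b_{23},b_{13}]^{-1}]=[b_{23},b_{12}]$; in both cases centrality of $[b_{23},b_{13}]$ in $\widehat{P}_3$ absorbs the extraneous factors. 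The only real obstacle is bookkeeping the $[b_{23},b_{13}]$-factors generated at each step; once one adopts the viewpoint that everything of commutator weight $\geq 3$ in the $b_{ij}$'s vanishes in $\widehat{U}_3$ and that $[b_{23},b_{13}]$ is central in $\widehat{P}_3$, the calculations are routine.
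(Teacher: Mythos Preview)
Your argument is correct. The paper itself does not supply a proof of this lemma: the $\square$ at the end of the statement indicates that the authors regard it as a routine computation and omit the details entirely. Your write-up therefore goes well beyond what the paper provides, and the route you take---pushing the standard $B_3$-conjugation rules for $a_{ij}$ through the quotient map and then simplifying via the $2$-step nilpotency of $\widehat{U}_3$ together with the previous lemma on the $b_{12}$-action---is exactly the natural way to fill in this gap. Your auxiliary use of the centrality of $b_{12}b_{13}b_{23}=(\sigma_1\sigma_2)^3$ to handle $b_{13}^{\sigma_2^{-1}}$ is a nice shortcut; one could also proceed uniformly from the previous lemma (as you do for $b_{23}^{\sigma_1^{-1}}$), since $[b_{12}^{-1},b_{23}^{-1}]$ and $[b_{12}^{-1},b_{13}^{-1}]$ both reduce immediately via $b_{23}^{b_{12}^{-1}}$ and $b_{13}^{b_{12}^{-1}}$, but either way the computation is short.
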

Let us denote by $\Lambda_3 = \{ e, \sigma_1, \sigma_2, \sigma_2 \sigma_1,
\sigma_1 \sigma_2, \sigma_1 \sigma_2 \sigma_1 \}$ the set of representatives of
$\widehat{P}_3$ in $\widehat{B}_3$.
Then every element in $\widehat{B}_3$ can be written  in the form
$$
b_{12}^{\alpha} b_{13}^{\beta} b_{23}^{\gamma} z^{\delta}
\lambda,~\mbox{where}~\alpha, \beta, \gamma, \delta \in
\mathbb{Z},~~z = [b_{23}, b_{13}], ~~\lambda \in \Lambda_3.
$$
\begin{thm}
The group $\widehat{B}_3$ is torsion-free.
\end{thm}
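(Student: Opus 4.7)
The plan is to classify a putative torsion element $g \in \widehat{B}_3$ by the order of its image in the quotient $S_3$. In the normal form $g = b_{12}^{\alpha} b_{13}^{\beta} b_{23}^{\gamma} z^{\delta} \lambda$ just established, the coset representative $\lambda \in \Lambda_3$ projects to an element of $S_3$ of order $1$, $2$, or $3$, and we rule out each case in turn.

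If the image of $\lambda$ is trivial, then $g \in \widehat{P}_3$, and we must show $\widehat{P}_3$ is torsion-free. The conjugation formulas established above imply that $z = [b_{23}, b_{13}]$ is central in $\widehat{P}_3$ and that every commutator of the generators $b_{12}, b_{13}, b_{23}$ is a power of $z$, so $\widehat{P}_3$ is nilpotent of class $2$ with abelianization $\widehat{P}_3^{\mathrm{ab}} \cong \mathbb{Z}^3$ (freely generated by the images of $b_{12}, b_{13}, b_{23}$) and commutator subgroup $[\widehat{P}_3, \widehat{P}_3] = \langle z \rangle \cong \mathbb{Z}$. Any torsion element would project to $0$ in the torsion-free group $\mathbb{Z}^3$, hence lie in $\langle z \rangle$, which is itself torsion-free; so $\widehat{P}_3$ has no nontrivial torsion.

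If $\lambda$ has order $n \in \{2,3\}$, write $w = b_{12}^{\alpha} b_{13}^{\beta} b_{23}^{\gamma} z^{\delta}$ and expand
$$
g^n = w \cdot (\lambda w \lambda^{-1}) \cdots (\lambda^{n-1} w \lambda^{-(n-1)}) \cdot \lambda^n.
$$
Each conjugate $\lambda^k w \lambda^{-k}$ is brought into normal form using the conjugation lemma, and then the entire product is straightened via the identities $b_{23} b_{12} = b_{12} b_{23} z^{-1}$, $b_{13} b_{12} = b_{12} b_{13} z$, $b_{23} b_{13} = b_{13} b_{23} z$ (all consequences of the commutator computations above). The equation $g^n = 1$ then becomes a system of polynomial equations in $\alpha, \beta, \gamma, \delta$. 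For $n = 2$ and $\lambda \in \{\sigma_1, \sigma_2, \sigma_1 \sigma_2 \sigma_1\}$ with squares $b_{12}$, $b_{23}$, $b_{12} b_{13} b_{23}$ respectively, the $b_{ij}$-exponent of $g^2$ corresponding to the fixed point of the associated $S_3$-transposition comes out to be $2k + 1$ for an integer $k$ (explicitly $2\alpha + 1$, $2\gamma + 1$, $2\beta + 1$ in the three cases), which never vanishes; this rules out $g^2 = 1$.

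The principal obstacle lies in the order-$3$ case, with $\lambda \in \{\sigma_1 \sigma_2, \sigma_2 \sigma_1\}$. Here $\lambda$ cyclically permutes $\{b_{12}, b_{13}, b_{23}\}$ modulo $z$, and $\lambda^3 = \Delta_3^2 = b_{12} b_{13} b_{23}$. All three $b_{ij}$-exponent equations of $g^3 = 1$ now collapse to the single condition $\alpha + \beta + \gamma = -1$, which is integrally solvable; the obstruction is therefore forced into the $z$-coordinate. One expands $g^3$ into normal form, tracking $z$-contributions coming from (i) the action $\lambda b_{ij} \lambda^{-1} = b_{i'j'} z^{\pm 1}$, (ii) the fact that $\lambda z \lambda^{-1} = z$ for a $3$-cycle (since it is an even word in the $\sigma_i$, each of which inverts $z$), and (iii) the straightening corrections. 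A careful but routine computation shows that the total $z$-exponent of $g^3$ has the form $3\delta + 3F(\alpha, \beta, \gamma) + 1$ for some integer polynomial $F$, and this is $\equiv 1 \pmod{3}$, so it cannot be zero. The technical heart of the proof is this normal-form bookkeeping for $g^3$; the elegant point is that once the residue modulo $3$ is identified, the contradiction is immediate and the theorem follows.
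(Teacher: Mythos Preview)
Your proposal is correct and follows essentially the same strategy as the paper: write a putative torsion element in the normal form $b_{12}^{\alpha} b_{13}^{\beta} b_{23}^{\gamma} z^{\delta}\lambda$, compute $g^n$ for $n$ the order of $\lambda$ in $S_3$, and show the resulting Diophantine system has no integer solution, with the decisive obstruction in the $3$-cycle case being that the $z$-exponent is $\equiv 1 \pmod 3$. The only notable difference is that the paper first conjugates to reduce the five nontrivial cosets to just two representatives ($\lambda = \sigma_2$ and $\lambda = \sigma_1\sigma_2$), whereas you treat all five cases; conversely, you supply the torsion-freeness of $\widehat{P}_3$ explicitly while the paper simply asserts it.
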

\begin{proof}
The group $\widehat{P}_3$ does not have torsion. Hence, if $\widehat{B}_3$
has elements of finite order, then they have the form
$$
b_{12}^{\alpha} b_{13}^{\beta} b_{23}^{\gamma} z^{\delta} \lambda,~~
\lambda \in \Lambda_3 \setminus \{ e \}.
$$
Every element which is conjugate with an element of finite order has a finite
order. Taking into account the following formulas
$$
\sigma_1^{-1} \cdot \sigma_2 \cdot \sigma_1 = b_{12}^{-1}
\sigma_1 \sigma_2 \sigma_1,
~~\sigma_2 \sigma_1 \cdot \sigma_2 \cdot \sigma_1^{-1} \sigma_2^{-1} =
\sigma_1,~~
\sigma_1^{-1} \cdot \sigma_1 \sigma_2 \cdot \sigma_1 = \sigma_2 \sigma_1,
$$
it is sufficient to consider only two cases: $\lambda = \sigma_2$ and
$\lambda = \sigma_1 \sigma_2$.
Let $\lambda = \sigma_2$, take $g = b_{12}^{\alpha} b_{13}^{\beta}
b_{23}^{\gamma} z^{\delta} \sigma_2$. Then we have
$$
g^2 = b_{12}^{\alpha + \beta} b_{13}^{\alpha + \beta}  b_{23}^{2\gamma + 1}
z^{ \alpha \gamma + \beta (\beta - \gamma + \alpha -1)}.
$$
If $g^2 = 1$, then $\alpha + \beta = 0$ and we have
$$
g^2 = b_{23}^{2\gamma + 1} z^{2 \alpha \gamma + \alpha}.
$$
Since  $2\gamma + 1$ cannot be zero for integer $\gamma$, the elements
of this form cannot be of finite order.

Let $\lambda = \sigma_1 \sigma_2$. Then we have
$$
(\sigma_1 \sigma_2)^2 = b_{12} \sigma_2 \sigma_1,~~(\sigma_1 \sigma_2)^3 = b_{12} b_{13} b_{23}.
$$
%It is not difficult to find
We calculate
\begin{multline*}
g^3 = (b_{12}^{\alpha} b_{13}^{\beta} b_{23}^{\gamma} z^{\delta} \sigma_1 \sigma_2)^3 =\\
b_{12}^{\alpha + \beta + \gamma + 1} b_{13}^{\alpha + \beta + \gamma + 1}
b_{23}^{\alpha + \beta + \gamma + 1} z^{\alpha(\alpha+2\gamma-\beta)+ \beta^2 +
\gamma^2 - \beta \gamma + 3\delta+3\beta}.
\end{multline*}
If $g^3 = 1$, then the following system of linear equations has a solution over $\mathbb{Z}$
$$
\left\{
\begin{array}{l}
\alpha + \beta + \gamma + 1 = 0,\\
\alpha(\alpha+2\gamma-\beta)+ \beta^2 + \gamma^2 - \beta \gamma + 3\delta+3\beta = 0. \\
\end{array}
\right.
$$
From the first equation one gets: $\alpha = -1 - \beta - \gamma$.
Inserting this expression of  $\alpha$  into the second equation, we have
$$
3 (\beta^2 +  2 \beta + \delta) + 1 = 0.
$$
However, this equation does not have integer solutions.
\end{proof}

\section{Open problems}

The homotopy virtual braid group $\widehat{VB}_n$ was defined in \cite{D}.

\begin{question}
1) Is it possible to construct a normal form for words, representing elements of $\widehat{VB}_n$?

2) Is there a faithful representation $\widehat{VB}_n \longrightarrow Aut(G_n)$ for some group $G_n$?

3) Is it possible to define Milnor invariants for homotopy virtual links?

\end{question}

\section{Acknowledgements}
The first author is supported by the Laboratory of Topology and Dynamics of Novosibirsk State University (contract no. 14.Y26.31.0025 with the Ministry of Education and Science of the Russian Federation).
The last author is partially supported by High-level Scientific Research Foundation of
Hebei Province and a grant (No. 11971144) of NSFC of China.

The second author thanks Emmanuel Graff for helpful discussions.

\end{document}